\documentclass[12pt]{article}
\usepackage{dsfont, amsfonts,amsthm,tikz}
\usepackage{graphicx, parskip,amsmath}
\usepackage{amsmath, amssymb}
\usepackage{geometry,setspace}
\usepackage{breqn}
\usepackage{rotating,lineno}
\usepackage{multicol,array,multirow,caption}
\usepackage{titlesec}
\usepackage{enumitem}
\usepackage{tabstackengine}
\setstackEOL{\cr}
\setlist{  
  listparindent=\parindent,
  parsep=0pt,
}
\titleformat*{\subsection}{\bfseries}
\parindent=.5 cm
\tabcolsep=.14 cm
\font\head=cmbx10 scaled 1728  

\long\def\symbolfootnote[#1]#2{\begingroup%
\def\thefootnote{\fnsymbol{footnote}}\footnote[#1]{#2}\endgroup}
\vfuzz2pt 
\hfuzz2pt 
\newtheorem{thm}{Theorem}[section]

\newtheorem{example}{Example}

\usepackage{algorithm2e}

\SetKwInOut{Parameter}{parameter}

\doublespace

\begin{document}
\vskip 18pt {\bf {\head \centerline{Some Partitionings of Complete Designs}
}
\vskip 18pt \centerline{M.H. Ahmadi$^a$,\ N. Akhlaghinia$^b$,
\ G.B. Khosrovshahi$^{a,}$\symbolfootnote[1]{Corresponding author: rezagbk@ipm.ir}, \ S. Sadri$^a$}} \vskip 6pt
{\centerline{{\it $^a$School of Mathematics, Institute for
Research in Fundamental Sciences} (IPM)}
 \vskip 6pt\centerline{{\it P.O. Box
$19395-5746$, Tehran, Iran}} \vskip 2pt

{\centerline{{\it $^b$Department of Mathematical Sciences Shahid, Beheshti University, G.C.,\\
}}
 \vskip 6pt\centerline{{\it P.O. Box
$19839-63113$, Tehran, Iran}} \vskip 2pt

\centerline{\{h.ahmadi117, narges.nia, rezagbk, sadri\}@ipm.ir}
\vskip 2pt
\centerline{Mathematics Subject Classifications: 05B20}
\vskip 5pt

\hrule 
\begin{abstract}
Let $v\geq6$ be an integer 
with $v\equiv2 \pmod 4$.
 In this paper, we introduce a new partitioning of the set of all $3$-subsets of a $v$-set  into some simple trades. 
\end{abstract}


\section{Introduction}
Integers $t, k$, and  $v$ with $0 \leq t \leq k \leq v-t$ are considered. Let
$X=\{0,1,\dots,v-1\}$ be a linearly ordered $v$-set, and
\[ \binom{X}{i} :=\{ A \subseteq X : |A|=i \}, \qquad 0 \leq i \leq v. \]
The
elements of $\binom{X}{k}$  are called {\it blocks}.
For the sake of brevity, sometimes a set $\{a_0,a_1, \dots , a_i\}$ is denoted by the string
$ a_0a_1 \dots a_i $.

The {\it inclusion matrix} $W_{tk} (v)$ (known as Wilson Matrix)  is defined to be a $\binom{v}{t}$ by $\binom{v}{k}$ 
$(0,1)$-matrix whose rows and columns are indexed by (and referred to)
the members of $\binom{X}{t}$ and $\binom{X}{k}$, respectively, 
where
 \begin {equation*} 
  W_{tk}(v)(T,K):=
  \left \{
    \begin{array}{r@{\qquad}l}
      1 & {\rm if} \ T\subseteq K\\
      0 & {\rm otherwise}
    \end{array}
  \right. \quad T \in {\binom{X}{t}},\ K \in {\binom{X}{k}}.
\end {equation*}
For the sake of convenience, sometimes we use $W_{tk}$ or just a bare $W$ for $W_{tk}(v)$.

Now, suppose that the elements of  $\binom{X}{k}$ are
lexicographically ordered. 
A $\mathbb{Z}$\,-{\it collection} of the elements
of $\binom{X}{k}$ is a function $f:\binom{X}{k}\to \mathbb{Z}$  with the vector representation $\big(f(A_1),\dots,f(A_{\binom{v}{k}})\big)^T$.

It is well known that $W_{tk}$ is a full rank matrix over
$\mathbb{Q}$ \cite{GraverJurkat}. As a linear
operator, $W_{tk}$ acts on a $\mathbb{Z}$\,-collection of blocks, and algebraically counts the number of
times that any member
of $\binom{X}{t}$ appears in the blocks of the collection.

Let {\bf 1} be the all 1 vector, and let $\lambda$ be a nonnegative integer. We call the following equation  the {\it fundamental equation} of design theory:
\begin{equation}\label{eq}
W_{tk}\, f=\lambda {\bf 1}.\tag{*}
\end{equation}

For $\lambda>0$, 
every nonnegative integral solution of Equation (\ref{eq}) is called a $t$-$(v,k,\lambda)$ {\it  design}.
Also, a simple $2$-$(v,3,1)$ design is called a {\it Steiner triple system}, denoted by ${\rm STS}(v)$. It is well known that an ${\rm STS}(v)$ exists if and only if 
$v\equiv 1,3\pmod 6$.
For more on Steiner triple systems, see \cite{TripleSystems}.

For $\lambda=0$, 
every integral solution of Equation (\ref{eq}) is called a
${\rm T}(t,k,v)$ {\it trade}. Let 
$T$ 
be a ${\rm T}(t,k,v)$ trade. 
Clearly $\rm T$ has some negative and positive entries. Therefore, we can assume that $\rm T = T_0-T_1$,
where $\rm T_0$ and $\rm T_1$ are two nonnegative $\mathbb{Z}$\,-collections.
Now, $W_{tk}\ {\rm{T}}_0=W_{tk}\ {\rm{T}}_1$ certifies that every element of $\binom{X}{t}$ appears equally often in $\rm T_0$ and $\rm T_1$. This means that $\rm T_0$ and $\rm T_1$, which are called the two {\it legs} of T, are mutually $t$-wise balanced.
The number of blocks in $T_0$ and $T_1$ are equal and is called the {\it volume} of $T$.

The {\it foundation} of a trade is the set of $s$ elements from $X$ appearing in the trade.
A nonzero
T$(t,k,v)$ trade  whose both volume and foundation size take the minimum
values is called a {\it minimal trade}. It is known that the volume
and foundation size of a T$(t,k,v)$ trade are at least  $2^t$ and $k+t+1$, respectively \cite{HandbookKhosro}. 
A trade is called {\it simple} if  it contains no repeated blocks.

In what follows, we present two kinds of simple trades.
\begin{itemize}
\item[$\bullet$]
Let 
$a_0a_1a_2$ 
and 
  $b_0b_1b_2$ 
  be two disjoint blocks. Then 
\begin{equation*}\label{minimal}
 {a_0\ a_1\ a_2 \choose b_0\ b_1\ b_2}:=(a_0-b_0)(a_1-b_1)(a_2-b_2),
\end{equation*}
is a minimal ${\rm T}(2,3,v)$ trade.

\item[$\bullet$] Suppose that 
$a_0a_1\dots a_{n-1}$
and 
$b_0b_1\dots b_{n-1}$، 
are two disjoint subsets of 
$X$, and $3\leq n \leq \frac{v}{2}$. Let $G$ be a graph with vertices $\{a_0b_0,a_1b_1,\dots,a_{n-1}b_{n-1}\}$.  For every cycle $\mathcal{C}$ of $G$ 
\begin{equation*}\label{VequalFTrade}
T(\mathcal{C}):=\sum_{(a_ib_i,a_jb_j)\in \mathcal{C}}a_ib_i(a_j+b_j)-a_jb_j(a_i+b_i),
 \end{equation*}
is a ${\rm T}(2,3,v)$ trade.

\end{itemize}

In Section \ref{PartitionMethod}, we construct a simple ${\rm T}(2,3,v)$ trade with volume 
$\binom{v}{3}/2$ by using disjoint trades. For example, Let $X=\{a_0,a_1,a_2,b_0,b_1,b_2\}$, and $(a_0b_0,\, a_1b_1,\,a_2b_2,\,a_0b_0)$ be a cycle of graph $G$ with vertices $\{a_0b_0,a_1b_1,a_2b_2\}$, then 
\[
\begin{pmatrix}
a_0&a_1&a_2\\
b_0&b_1&b_2
\end{pmatrix}
{\text{\Large $+$}}\;
T(a_0b_0,\, a_1b_1,\,a_2b_2,\,a_0b_0)
\] 
is a simple ${\rm T}(2,3,6)$ trade with volume $10$.

A simple $\rm{T}(t,k,v)$ trade with volume  $\binom{v}{k}/2$ is called a $(t,k,v)$-{\it halving}.  
The following conjecture (known as {\it halving conjecture}) is due to Alan Hartman \cite{Hartman}:
\begin{quotation}
\noindent
{\bf Conjecture.}
\it There exists a  $(t,k,v)$-halving if and only if 
for $0\leq i\leq t$,
$\binom{v-i}{k-i}\equiv 0\pmod2$.
\end{quotation} 

Clearly, 
every $(t,k,v)$-halving can be written as a linear combination of some trades with smaller volumes. 
  In what follows, 
we  briefly describe three  methods available in the literature  for constructing a $(2,3,v)$-halving. 
All methods employ the linear combination of trades.  

\begin{itemize}
\item[$\circ$] {\bf AK algorithm \cite{NewBasis}.}
For a block $a_0a_1a_2$, we  choose the block  $b_0b_1b_2$ such that 
\[\begin{split}
    b_2&=a_2+1,\\
    b_1&=\min\Big(\big\{x\in X\setminus\{a_0,a_1,a_2,b_2\}\, \mid x>a_1\big\}\Big),\\
    b_0&=\min\Big(\big\{x\in X\setminus \{a_0,a_1,a_2,b_2,b_1\}\,\mid x>a_0\big\}\Big).
    \end{split}\]
Then we define the trade 
\[\mathfrak{T}_{a_0a_1a_2}:={a_0\ a_1\ a_2 \choose b_0\ b_1\ b_2}.\]

Ajoodani and Khosrovshahi 
proved that the following algorithm produces a $(2,3,v)$-halving.
\begin{algorithm}
\Begin{
$T :=\mathfrak{T}_{012}$
\While{T is not a $(2,3,v)$-halving}{
  Find first block $B$ on lexicographical order such that $B\not\in T$
  \lIf{$T+\mathfrak{T}_B$ is simple}{$T:= T+\mathfrak{T}_B$}
\lElse{$T:= T-\mathfrak{T}_B$}
}
\KwRet $T$
}
 \end{algorithm}
 
 Although, through this algorithm the halving is constructed directly, but it does not reveal the trade-like structure of the halving. 
\item[$\circ$] {\bf Standard recursive method \cite{MoreOnWilson}.} 
By reordering the columns of $W$, 
we can write the 
 reduced row echelon form of  $W$  as 
$\big(C\mid {\rm I}\big)$. 

Then  
$\big( \frac{\rm ~I}{-C} \big)$
is called {\it the standard basis} for the kernel of $W$ and denoted by 
$\mathbb{S}_{tk}(v)$.

Naturally,  
every $(t,k,v)$-halving is a linear combination of the columns of $\mathbb{S}_{tk}(v)$. 
The complicated structure of $\mathbb{S}_{tk}(v)$ does not reveal much information about the structure of $(t,k,v)$-halving and in this regard 
 the following two conjectures remain open  \cite{MoreOnWilson, Wong}.
\begin{itemize}
\item The elements of every row of $\mathbb{S}_{tk}(v)$ have the same sign.
\item For $t>1$, the matrix $\mathbb{S}_{tk}(v)$ contains a nowhere zero row.
\end{itemize}

Nevertheless, 
in
\cite{Ahmadi}, 
by carefully studying $\mathbb{S}_{23}(v)$, 
a $(1,-1)$-vector $\eta$ is constructed recursively  such that  
$\mathbb{S}_{23}(v)\eta=h$, and $h$ is a 
$(2,3,v)$-halving.

In this method $\binom{v}{3}-\binom{v}{2}$
trades are used which are not  necessarily simple and every two trade are not disjoint.
\item[$\circ$] {\bf V10 Method \cite{Wong}.}
Let  
 $v=4n+2$ 
  and 
$X$ 
 is partitioned into two subsets 
 $a_0a_1\dots a_{2n}$ 
and 
 $b_0b_1\dots b_{2n}$.
 If $\alpha_0\alpha_1\alpha_2$ is a subset of 
 $\{0,1,\dots ,2n\}$, then we define the trade 
\[\mathcal{T}_{\alpha_0\alpha_1\alpha_2}:=\begin{pmatrix}
a_{\alpha_0}& a_{\alpha_1} &a_{\alpha_2}\\
b_{\alpha_0}& b_{\alpha_1} &b_{\alpha_2}
\end{pmatrix}
+ 
\begin{pmatrix}
a_{\alpha_0}& a_{\alpha_1} &a_{\alpha_2}\\
b_{\alpha_1}& b_{\alpha_2} &b_{\alpha_0}
\end{pmatrix}
-
\begin{pmatrix}
a_{\alpha_0}& a_{\alpha_1} &a_{\alpha_2}\\
b_{\alpha_2}& b_{\alpha_0} &b_{\alpha_1}
\end{pmatrix}.
\]

It is easy to show that
$\mathcal{T}_{\alpha_0\alpha_1\alpha_2}$   
is a simple trade with volume $10$ and foundation size $6$. 
The following summation gives  a $(2,3,v)$-halving
\[
\sum_{\substack{\alpha_0\alpha_1\alpha_2\subseteq \{0,\dots,2n\}}}
\mathcal{T}_{\alpha_0\alpha_1\alpha_2} 
 \times (-1)^{\alpha_0+\alpha_1+\alpha_2}.
\]
 
\noindent{\bf Example.} The following summation is a $(2,3,10)$-halving
\[\mathcal{T}_{123}-\mathcal{T}_{124}+\mathcal{T}_{125}+\mathcal{T}_{134}-\mathcal{T}_{135}+\mathcal{T}_{145}-\mathcal{T}_{234}+\mathcal{T}_{235}-\mathcal{T}_{245}+\mathcal{T}_{345}.\]
For instance
\begin{dmath*}
\setstacktabbedgap{2.7pt}
\mathcal{T}_{235}=
\parenMatrixstack{
2&3&5\cr
7&8&10
}
+\parenMatrixstack{
2&3&5\cr
8&10&7
}
-\parenMatrixstack{
2&3&5\cr
10&7&8
}\cdot
\end{dmath*}

In this  method,  the linear combination of  
 $\binom{\frac{v}{2}}{3}$ 
trades with volume $10$ and foundation size $6$ is used to construct 
a $(2,3,v)$-halving. 
Every two trades are disjoint or  have exactly $4$ blocks in common.
\end{itemize}
\section{%
Partition method
}\label{PartitionMethod} 

Recently,  Sauskan and Tarannikov
partitioned $\binom{\{1,\dots,10\}}{3}$  into 
$15$ disjoint minimal trades and subsequently by augmenting these trades they obtained a $(2,3,10)$-halving  \cite{Packing}.  
 Then, for $v=4(2n)+2$, they constructed a $(2,3,v)$-halving by utilizing these trades and the method of {\it combining $t$-designs}.
 Originally, this method was used to obtain some infinite families of $t$-designs \cite{Combining}.

In \cite{Packing}, 
 for constructing  $(2,3,10)$-halving, no algorithm  has  been presented. Here,
we construct $(2,3,10)$-halvings based on a {\it hill climbing} process. 

\begin{algorithm}[H]
\Begin{
$M:=\binom{\{1,\dots,10\}}{3}$\\
$n:=0$\\
$H:=\varnothing$\\
\While{$n\neq 15$}{
  \eIf{there is a $B$ in $M$ such that $T_B\subseteq M$}{
 $H:=H\cup\{T_B\}$\\
  $n:=n+1$\\
  $M:=M\setminus T_B$}
{choose  trade $T$ from $H$\\
$H:=H\setminus\{T\}$\\
  $n:=n-1$\\
  $M:=M\cup T$}
}
}
\end{algorithm}
\begin{example}
 A ${(2,3,10)}$-halving.
\begin{footnotesize}
\begin{dmath*}
\setstacktabbedgap{2.7pt}
\parenMatrixstack{
3&4&6\cr
8&7&9
}
+\parenMatrixstack{
2&5&10\cr
6&3&9
}
+\parenMatrixstack{
1&3&5\cr
2&4&6
}
+\parenMatrixstack{
2&3&8\cr
5&9&4
}
+\parenMatrixstack{
3&7&10\cr
6&1&2
}
+\parenMatrixstack{
1&2&9\cr
10&3&4
}
+\parenMatrixstack{
4&7&8\cr
1&5&6
}
+\parenMatrixstack{
1&2&7\cr
4&3&8
}\\
+\parenMatrixstack{
4&7&9\cr
2&6&10
}
+\parenMatrixstack{
3&5&10\cr
1&8&9
}
+\parenMatrixstack{
3&6&8\cr
2&7&5
}
+\parenMatrixstack{
1&7&9\cr
5&10&4
}
+\parenMatrixstack{
8&9&10\cr
7&5&6
}
+\parenMatrixstack{
1&7&10\cr
8&2&5
}
+\parenMatrixstack{
6&8&10\cr
4&9&1
}\cdot
\end{dmath*}
\end{footnotesize}
\end{example}

We note that a $(2,3,v)$-halving exists if and only if $v=4n+2$. 
Clearly, by augmetation of minmal trades
for $v=4(2n-1)+2$,
one can not  construct a $(2,3,v)$-halving.
In what follows, we describe our partitioning mehtod to construct $(2,3,v)$-halvings.

\begin{thm}\label{DisjointTrades}
For positive integer $n$,
let 
$v=4n+2$ 
 and suppose that 
$X$ is  partitioned into two subsets  
 $a_0a_1\dots a_{2n}$ 
and 
 $b_0b_1\dots b_{2n}$.   Let $\mathcal{C}$ be the Eulerian cycle of  complete graph $G$ with vertices $\{a_0b_0,a_1b_1,\dots,a_{2n}b_{2n}\}$, then 
\[
h=\sum_{\substack{\alpha_0\alpha_1\alpha_2\subseteq \{0,\dots,2n\}}}
\begin{pmatrix}
a_{\alpha_0}& a_{\alpha_1} &a_{\alpha_2}\\
b_{\alpha_0}& b_{\alpha_1} &b_{\alpha_2}
\end{pmatrix} 
+
\;T(\mathcal{C})
\] 
is a 
$(2,3,4n+2)$-halving. 
\end{thm}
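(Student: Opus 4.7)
The plan is to establish three properties of $h$: it is a $T(2,3,v)$ trade, every block of $\binom{X}{3}$ occurs in $h$ with coefficient exactly $+1$ or $-1$, and hence $h$ is simple with volume $\binom{v}{3}/2$. The trade property is immediate from linearity, since each minimal summand $\binom{a_{\alpha_0}a_{\alpha_1}a_{\alpha_2}}{b_{\alpha_0}b_{\alpha_1}b_{\alpha_2}}$ is a $T(2,3,v)$ trade and $T(\mathcal{C})$ is a $T(2,3,v)$ trade by the cycle construction recalled in the preliminaries, so $h$ is a $\mathbb{Z}$-combination of trades. The real work is the block-by-block coefficient check.

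I will stratify $\binom{X}{3}$ by the multiset of pair-indices of its elements. Identifying each element of $X$ with its pair-index in $\{0,\dots,2n\}$, every block falls into exactly one of two strata: three distinct indices, or exactly one repeated index (three-fold repetition is impossible, since a pair has only two elements). The claim is that the two pieces of $h$ are supported on complementary strata --- the minimal-trade sum on the distinct-index blocks and $T(\mathcal{C})$ on the repeated-index blocks --- and that on its own stratum each piece hits every block exactly once with coefficient $\pm 1$.

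For the distinct-index stratum, expanding $(a_{\alpha_0}-b_{\alpha_0})(a_{\alpha_1}-b_{\alpha_1})(a_{\alpha_2}-b_{\alpha_2})$ produces the eight blocks $\{x_{\alpha_0},y_{\alpha_1},z_{\alpha_2}\}$ with $x,y,z\in\{a,b\}$, each with sign equal to the parity of the number of $b$'s. As $\{\alpha_0,\alpha_1,\alpha_2\}$ ranges over $3$-subsets of $\{0,\dots,2n\}$ these eight-block supports are pairwise disjoint, so every distinct-index block receives coefficient $\pm 1$ from exactly one summand. Inspection shows $T(\mathcal{C})$ contributes $0$ here, since every block produced by $T(\mathcal{C})$ contains a full pair $\{a_i,b_i\}$ and hence has a repeated pair-index.

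For the repeated-index stratum, a block has the form $\{a_i,b_i,z_j\}$ with $i\neq j$ and $z\in\{a,b\}$. No minimal trade in the first sum supports such a block, so only $T(\mathcal{C})$ can contribute. Moreover, within $T(\mathcal{C})$ only the summand associated to the edge $\{a_ib_i,a_jb_j\}$ can produce this block, and its defining formula $a_ib_i(a_j+b_j)-a_jb_j(a_i+b_i)$ does so with coefficient $\pm 1$ (sign depending on the orientation of the edge in $\mathcal{C}$). The main obstacle to control --- and the real reason the hypothesis ``$\mathcal{C}$ is Eulerian'' is used --- is that each such pair-edge must be traversed exactly once: the Eulerian cycle of the complete graph on $\{a_0b_0,\dots,a_{2n}b_{2n}\}$ exists because every vertex has even degree $2n$, and it guarantees that every edge appears precisely once in the sum, preventing both under- and over-counting. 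Combining the two strata, every block of $\binom{X}{3}$ appears in $h$ with coefficient $\pm 1$; this simultaneously yields simplicity and forces the volume to be $\binom{v}{3}/2$, completing the proof.
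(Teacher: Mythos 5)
Your proof is correct and follows essentially the same route as the paper's: the paper asserts that all the blocks of $h$ are distinct and then verifies the count $\binom{2n+1}{3}\cdot 4+\binom{2n+1}{2}\cdot 2=\binom{4n+2}{3}/2$, while you carry out that distinctness/coverage check explicitly by stratifying blocks according to their pair indices and noting the role of the Eulerian cycle. Your write-up is more detailed than the paper's (which dismisses the key step as ``easy to check''), but the underlying argument is the same.
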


\begin{proof}
It is easy to check that all the blocks of $h$ are disjoint. Then by the following relation 
\[\binom{2n+1}{3}\times 4+\binom{2n+1}{2}\times2=\frac{\binom{4n+2}{3}}{2},\]
clearly 
$h$
is 
a $(2,3,4n+2)$-halving.
\end{proof}

In the following Theorem, we partition all blocks of $T(\mathcal{C})$ of Theorem \ref{DisjointTrades} into trades with volume $6$ and volume $8$.
\begin{thm}
For positive integer $n$,
let 
$v=4n+2$ 
 and suppose that 
$X$ is  partitioned into two subsets  
 $a_0a_1\dots a_{2n}$ 
and 
 $b_0b_1\dots b_{2n}$.  Then 
\begin{itemize}
\item[$(i)$]
Let
$\,2n+1{\equiv} 1,3\pmod6$. 
Consider an ${\rm STS}(2n+1)$ with elements from $\{a_0,a_1,\dots,a_{2n}\}$. 
Then
\[
h_1=\sum_{\substack{\beta_0\beta_1\beta_2\in {\rm STS}(2n+1)}}
T(a_{\beta_0}b_{\beta_0},\,a_{\beta_1}b_{\beta_1},\,a_{\beta_2}b_{\beta_2},\,a_{\beta_0}b_{\beta_0})
\] 
is a 
${\rm T}(2,3,4n+2)$ trade with volume $2\binom{2n+1}{2}$. 

\item[$(ii)$]
Let
$\,2n+1{\equiv} 5\pmod6$.
Consider 
${\rm STS}(2n-1)$ 
with elements from  
$\{a_0,\dots,a_{2n-2}\}$.
If $S$ is 
a partition of
$\{a_0,\dots,a_{2n-3}\}$ 
into $2$-subsets, then 
\begin{align*}
h_2=&\sum_{\substack{\beta_0\beta_1\beta_2\in {\rm STS}(2n-1)}}
T(a_{\beta_0}b_{\beta_0},\,a_{\beta_1}b_{\beta_1},\,a_{\beta_2}b_{\beta_2},\,a_{\beta_0}b_{\beta_0})\\
&\;+ 
T(a_{2n-2}b_{2n-2},\,a_{2n-1}b_{2n-1},\,a_{2n}b_{2n},\,a_{2n-2}b_{2n-2})\\
&\;+
\sum_{\substack{a_ia_j\in S}}
T(a_{2n-1}b_{2n-1},\,a_{i}b_{i},\,a_{2n}b_{2n},\,a_{j} b_{j},\,a_{2n-1}b_{2n-1})
\end{align*}
is 
a ${\rm T}(2,3,4n+2)$ trade with volume $2\binom{2n+1}{2}$.
\end{itemize}
\end{thm}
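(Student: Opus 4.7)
The plan is to reduce the theorem to a statement about edge-decompositions of the complete graph $K_{2n+1}$ on the vertex set $V=\{a_0b_0,a_1b_1,\ldots,a_{2n}b_{2n}\}$. By the second bullet of Section~1, for every cycle $\mathcal{C}$ of this $G$ the element $T(\mathcal{C})$ is already a $T(2,3,4n+2)$ trade, and a sum of trades is again a trade. What remains is to verify that the volume of the sum equals the total of the individual volumes, i.e.\ that no blocks cancel across summands.

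The key disjointness observation is the following: an edge $e=(a_ib_i,a_jb_j)$ contributes to $T(\mathcal{C})$ exactly the four blocks $a_ib_ia_j,\ a_ib_ib_j,\ a_jb_ja_i,\ a_jb_jb_i$ (with signs depending on orientation), and each of these four blocks uniquely determines the unordered pair $\{i,j\}$, hence $e$ itself. Therefore, if $\mathcal{C}_1,\ldots,\mathcal{C}_m$ are pairwise edge-disjoint cycles of $G$, the block multisets of $T(\mathcal{C}_1),\ldots,T(\mathcal{C}_m)$ are pairwise disjoint, and $\sum_iT(\mathcal{C}_i)$ is a simple trade of volume $2\sum_i|E(\mathcal{C}_i)|$. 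Both parts of the theorem then reduce to exhibiting a decomposition of $E(K_{2n+1})$ into cycles of the required types, so that the total edge count is $\binom{2n+1}{2}$ and hence the total volume is $2\binom{2n+1}{2}$.

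For part~$(i)$, the congruence $2n+1\equiv1,3\pmod 6$ gives an ${\rm STS}(2n+1)$ on $\{a_0,\ldots,a_{2n}\}$, whose blocks are a triangle decomposition of $K_{2n+1}$; applying the disjointness observation to these $3$-cycles yields $h_1$ as a trade of volume $6\cdot\binom{2n+1}{2}/3=2\binom{2n+1}{2}$. For part~$(ii)$, since $2n+1\equiv 5\pmod 6$ we have $2n-1\equiv 3\pmod 6$, so an ${\rm STS}(2n-1)$ on $\{a_0,\ldots,a_{2n-2}\}$ triangulates the corresponding $K_{2n-1}$. The remaining $4n-1$ edges of $K_{2n+1}$ are all incident to $\{a_{2n-1}b_{2n-1},a_{2n}b_{2n}\}$: the extra triangle on $\{2n-2,2n-1,2n\}$ covers $(2n-1,2n),(2n-2,2n-1),(2n-2,2n)$, while each pair $\{a_i,a_j\}\in S$ contributes the $4$-cycle whose edges are $(2n-1,i),(i,2n),(2n,j),(j,2n-1)$; since $S$ partitions $\{a_0,\ldots,a_{2n-3}\}$, the $2(n-1)$ edges from $2n-1$ and the $2(n-1)$ edges from $2n$ into $\{0,\ldots,2n-3\}$ are each covered exactly once. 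The main obstacle is this partition check in part~$(ii)$; once verified, the disjointness lemma delivers $h_2$ as a trade of volume $2\binom{2n+1}{2}$.
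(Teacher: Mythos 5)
Your proposal is correct and follows the same overall strategy as the paper: both arguments add up the volumes of the constituent cycle-trades ($6$ per triangle, $8$ per $4$-cycle) and check that the total is $2\binom{2n+1}{2}$. In fact your write-up is more complete than the paper's, whose proof consists only of this arithmetic: the block-disjointness of the summands (your observation that each block of $T(\mathcal{C})$ determines the edge $\{i,j\}$ it came from, so edge-disjoint cycles yield block-disjoint trades) and the verification that the STS triangles together with the extra triangle and the $4$-cycles over $S$ form an edge-decomposition of $K_{2n+1}$ are both left implicit in the paper, and these are exactly the steps needed to justify that no cancellation occurs in $h_1$ and $h_2$.
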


\begin{proof}
\begin{itemize}
\item[$(i)$]
Since the number of blocks of 
${\rm STS}(2n+1)$ 
is 
$\frac{1}{3}\binom{2n+1}{2}$,
then 
 $h_1$ 
is the augmentation of 
$\frac{1}{3}\binom{2n+1}{2}$ trades with volume $6$. 
Now,  by 
the  relation 
\[\frac{1}{3}\binom{2n+1}{2}\times6=2\binom{2n+1}{2},\] 
$h_1$ 
is a 
${\rm T}(2,3,4n+2)$ trade.

\item[$(ii)$]
Since the number of blocks of 
${\rm STS}(2n-1)$ 
is 
$\frac{1}{3}\binom{2n-1}{2}$, 
then 
$h_2$ 
is the augmentation  of  
$\frac{1}{3}\binom{2n-1}{2}+1$
trades of volume 
$6$ 
and 
$n-1$ 
trades with of
 $8$.
Now,  by 
the following relation 
\[\left(\frac{1}{3}\binom{2n-1}{2}+1\right)\times6+(n-1)\times8=2\binom{2n+1}{2},\]
$h_2$ 
is a 
${\rm T}(2,3,4n+2)$ trade.

\end{itemize} 
\end{proof}
\section{%
Some Examples
} 
{\bf 
1. $\mathbf{(2,3,14)}$-halving.
} 
Let 
$a_i=i$ 
and 
$b_i=i+7$, where 
$0\leq i\leq 6$. 
Consider
${\rm STS}(7)=\{013,026,045,124,156,235,346\}.$ 
Then 
\[\begin{aligned}
h=
\sum_{\substack{\alpha_0\alpha_1\alpha_2\subseteq \{0,\dots,6\}}}
\begin{pmatrix}
a_{\alpha_0}& a_{\alpha_1} &a_{\alpha_2}\\
b_{\alpha_0}& b_{\alpha_1} &b_{\alpha_2}
\end{pmatrix} 
\;+
\sum_{\substack{\beta_0\beta_1\beta_2\in {\rm STS}(7)}}
T(a_{\beta_0}b_{\beta_0},\,a_{\beta_1}b_{\beta_1},\,a_{\beta_2}b_{\beta_2},\,a_{\beta_0}b_{\beta_0})\end{aligned}\] 
is a $(2,3,14)$-halving.  
Therefore, the trade structure of $(2,3,22)$-halving is the following:
\[\begin{aligned}35\, ({\rm minimal\; trades})+7\, ({\rm trades \;of\; volume\; 6})=182.\end{aligned}\] 
\\[10pt]
{\bf 
2. $\mathbf{(2,3,22)}$-halving.
}
Let 
$a_i=i$ 
and 
$b_i=i+11$, where 
$0\leq i\leq 10$. 
Consider
${\rm STS}(9)=\{012,036,048,057,138,147,156,237,246,258,345,678\}$ and 
$S=\{01,23,45,67\}$. 
Then 
\begin{align*}
h=&\sum_{\substack{\alpha_0\alpha_1\alpha_2\subseteq \{0,\dots,10\}}}
\begin{pmatrix}
a_{\alpha_0}& a_{\alpha_1} &a_{\alpha_2}\\
b_{\alpha_0}& b_{\alpha_1} &b_{\alpha_2}
\end{pmatrix} 
+
\sum_{\substack{\beta_0\beta_1\beta_2\in {\rm STS}(9)}}
T(a_{\beta_0}b_{\beta_0},\,a_{\beta_1}b_{\beta_1},\,a_{\beta_2}b_{\beta_2},\,a_{\beta_0}b_{\beta_0})
 \\&\;+ T(a_{8}b_{8},\,a_{9}b_{9},\,a_{10}b_{10},\,a_{8}b_{8})\\
&\;
+
\sum_{\substack{a_ia_j\in S}}
T(a_{9}b_{9},\,a_{i}b_{i},\,a_{10}b_{10},\,a_{j}b_{j},\,a_{9}b_{9})
\end{align*}
is a  
$(2,3,22)$-halving.
Therefore, the trade structure of $(2,3,22)$-halving is the following:
\[\begin{aligned}165\, ({\rm minimal\; trades})+13\, ({\rm trades \;of\; volume\; 6})+4\,({\rm trades\; of\; volume\; 8})=770.\end{aligned}\] 
\section*{Acknowledgements} 
The authors thank Professor Denis Krotov of  Sobolev Institute of Mathematics for mentioning the paper by Sauskan and Tarannikov \cite{Packing}.

\end{document}